\newcommand{\Hc}{{\mathcal H}}
\newcommand{\Kc}{{\mathcal K}}
\newcommand{\Bc}{{\mathcal B}}\newcommand{\Rc}{{\mathcal R}}\newcommand{\Zc}{{\mathcal Z}} 
\newcommand{\Lc}{{\mathcal L}}
\newcommand{\Nc}{{\mathcal N}}
\newcommand{\Mc}{{\mathcal M}}
\begin{document}
\newtheorem{defn}{\bf Definition}[section]
\newtheorem{teo}{\bf Theorem}[section]
\newtheorem{lem}[teo]{\bf Lemma}
\newtheorem{prop}[teo]{\bf Proposition}
\newtheorem{cor}[teo]{\bf Corollary}
\theoremstyle{remark}
\newtheorem{ex}{\bf Example}[section]
\newtheorem{rem}{\it Remark}[section]

\title{A note on partial isometries on pseudo-Hilbert spaces}

\author{P\u{a}storel Ga\c{s}par \and Loredana Ciurdariu}
\date{ }
\maketitle
\begin{abstract}
The aim of this paper is to show that two accessible subspaces in the Loynes $\mathcal{Z}$~-~space $\Hc$ are the initial and final space of a partial gramian isometry, respectively if the norm of the difference of the associated gramian selfadjoint projections is strictly less than $1$.\footnote{AMS Subject Classification: 46C50, 47B37 \\ Keywords: pseudo-Hilbert spaces, partial gramian isometry, gramian selfadjoint projection}
\end{abstract}

\section{Introduction}

Generalizing the concept of pre-Hilbert or Hilbert space, R.M. Loynes introduced in \cite{6} the $VE$~-~spaces or $VH$~-~spaces respectively. A $VH$~-~space is characterized in \cite{7} by the fact that the inner product takes values in a suitable ordered topological vector (admissible) space ${\mathcal Z}$, thus being also called \emph{Loynes ${\mathcal Z}$~-~spaces}. Many authors used these spaces in the study of abstract stochastic processes (see \cite{8}, \cite{1}, \cite{11}, \cite{12}). In \cite{11} these spaces are referred to as \emph{pseudo-Hilbert spaces}. Spectral theory for some classes of operators on such spaces was developed initially by Loynes himself (\cite{6}, \cite{7}) and later by the authors in \cite{3}, respectively \cite{2} and by A. Gheondea and B.~E. Ugurcan in \cite{Gheondea}. \\
In what follows $\Hc$, $\Kc$ will denote two pseudo-Hilbert spaces over the same admissible space $\Zc$ and $\Lc(\Hc, \Kc)$ the space of all linear operators from $\Hc$ to $\Kc$.

Recall that an operator $T \in \Lc(\Hc, \Kc)$ is \emph{bounded}, if there exists a constant $M>0$ such that
\begin{equation}\label{eq:1}
[Th, Th]_\Kc\leq M^2[h, h]_\Hc, \qquad\quad h\in \Hc,
\end{equation}
where $[\cdot, \cdot]_{\Kc}$ is the inner product (also referred to as \emph{gramian}) of the Loynes ${\mathcal Z}$~-~space $\Kc$, while ``$\leq$'' means the order in $\Zc$. We shall denote that by $T\in\Bc(\Hc, \Kc)$. As usually, for $\Hc = \Kc$, we use the notations $\Lc(\Hc)$ and $\Bc(\Hc)$ respectively. Moreover $\Bc(\Hc, \Kc)$ is a Banach space (algebra if $\Hc= \Kc$) with the norm defined by
\[ \|T\| = \|T\|_{\Bc(\Hc, \Kc)} = \inf \{M \ :\ \eqref{eq:1}\  \text{holds}\ \}. \]
The operators $T\in \Bc(\Hc, \Kc)$ for which $\|T\| \le 1$ will be called \emph{gramian contractions}. \\
The adjoint $T^*$ of an operator $T\in\Lc(\Hc, \Kc)$ and the gramian orthogonal complement $\Mc^\perp$ of a subspace $\Mc$ of $\Hc$ will be defined (if they exist) analogously as in the Hilbert space case, but with respect to the inner products of $\Hc$ and $\Kc$. \\
By $\Lc^\ast(\Hc, \Kc), \Bc^\ast(\Hc, \Kc)$ will be denoted the set of all adjointable elements of $\Lc(\Hc, \Kc)$ and $\Bc(\Hc, \Kc)$, respectively, whereas $P_\Mc$ denotes the gramian selfadjoint projection associated to the complementable (accessible) subspace $\Mc$ of $\Hc$.\\
We also remark that $\Lc^\ast(\Hc, \Kc)\cap\Bc(\Hc, \Kc)=\Bc^\ast(\Hc, \Kc)$ and $\Bc^*(\Hc)$ is a $C^*$~-~algebra. \\
$T\in\Lc(\Hc, \Kc)$ is called a \emph{gramian isometry} (\emph{gramian co-isometry})
if $T \in \Bc^*(\Hc, \Kc)$ and $T^*T = I_\Hc$ ($T T^* = I_\Kc$, respectively) and $T$ is \emph{gramian unitary} if it is simultaneously a gramian isometry and a gramian co-isometry. \\
If a gramian contraction $T$ is adjointable, then $T^\ast$ is a gramian contraction too (see \cite{6}, \cite{7}).
Familiar examples of adjointable contractions are self-adjoint gramian projections and gramian partial isometries, the latter containing two remarkable subclasses: that of gramian isometries and of gramian co-isometries. The latter classes were studied by the first author in \cite{3}, where also a geometric proof of the existence of the gramian co-isometric extension of a gramian adjointable contraction is given. \\
In what follows some definitions and results from \cite{3} are needed.
\begin{defn}
Let $\Hc$ and $\Kc$ be two Loynes $\mathcal{Z}$-spaces. A linear operator $T\in\Lc(\Hc, \Kc)$ is a partial gramian isometry, if its kernel $\Nc(T)$ and its range $\Rc(T)$ are accessible (i.e. they have gramian orthogonal complements) in $\Hc$ and $\Kc$, respectively and from $\Nc(T)^\perp$ to $\Rc(T)$ it preserves the gramian (is gramian unitary). The spaces $\Mc:=\Nc(T)^\perp$ and $\Rc(T)$ are called the initial and the final space of $T$, respectively. The set of all partial gramian isometries  from $\Hc$ to $\Kc$ will be denoted by $\mathcal{PI}(\Hc, \Kc)$.
\end{defn}
It can be easily seen, that $\mathcal{PI}(\Hc, \Kc)\subset\Bc^\ast(\Hc, \Kc)$. Observe that if $\Nc(T)=0$, then $T$ is simply a gramian isometry.

\begin{prop}
If $T\in\mathcal{PI}(\Hc, \Kc)$, then $T^\ast T$ and $TT^\ast$ are gramian self-adjoint projections for which the following hold:
\begin{itemize}
\item[(i)] $T^\ast T=P_{\Mc(T)}$;
\item[(ii)] $P_{\Rc(T)}=TT^\ast$;
\item[(iii)] If $\Hc=\Kc$ then $T$ is a partial isometry in $\Bc^\ast(\Hc)$ as a $C^\ast$-algebra.
\end{itemize}

\end{prop}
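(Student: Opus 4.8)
The plan is to reduce everything to the two structural facts supplied by the definition of a partial gramian isometry. First, since $\Nc(T)$ is accessible, one has the gramian-orthogonal decomposition $\Hc = \Mc \oplus \Nc(T)$ with $\Mc := \Mc(T) = \Nc(T)^\perp$, so that $P := P_{\Mc(T)}$ is a well-defined gramian self-adjoint projection with $P^\ast = P = P^2$ and $(I_\Hc - P)h \in \Nc(T)$ for every $h \in \Hc$. Second, $T$ preserves the gramian on its initial space, that is $[Tm, Tm']_\Kc = [m, m']_\Hc$ for all $m, m' \in \Mc$. Throughout I would use that $T \in \mathcal{PI}(\Hc, \Kc) \subset \Bc^\ast(\Hc, \Kc)$, so that $T^\ast$ exists.

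For (i) the starting observation is that $Th = T(Ph) + T((I_\Hc - P)h) = T(Ph)$, because the second summand lies in $\Nc(T)$. Hence, for arbitrary $h, h' \in \Hc$,
\[
[T^\ast T h, h']_\Hc = [Th, Th']_\Kc = [TPh, TPh']_\Kc = [Ph, Ph']_\Hc = [Ph, h']_\Hc ,
\]
where the third equality is gramian preservation on $\Mc$ and the last follows from $[Ph, Ph']_\Hc = [P^\ast P h, h']_\Hc = [Ph, h']_\Hc$. Since this holds for every $h'$, the definiteness of the $\Zc$-valued inner product on $\Hc$ (i.e. $[x,x]_\Hc = 0 \Rightarrow x = 0$, applied to $x = (T^\ast T - P)h$) forces $T^\ast T = P = P_{\Mc(T)}$.

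For (ii) the same identity $Th = T(Ph)$ yields $TP = T$, whence the partial-isometry relation $TT^\ast T = T$ follows from (i). Now $TT^\ast$ is visibly gramian self-adjoint, and it is idempotent because $(TT^\ast)^2 = T(T^\ast T)T^\ast = T P T^\ast = (TP)T^\ast = TT^\ast$; thus $TT^\ast$ is a gramian self-adjoint projection. To identify its range I would prove $\Rc(TT^\ast) = \Rc(T)$: the inclusion ``$\subseteq$'' is immediate, while for $y = Tm \in \Rc(T)$ with $m \in \Mc$ one computes $TT^\ast y = TT^\ast T m = T P m = Tm = y$, giving ``$\supseteq$''. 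Since a gramian self-adjoint projection is determined by its range (its kernel being the gramian-orthogonal complement of that range), and $\Rc(T)$ is accessible so that $P_{\Rc(T)}$ exists, I conclude $TT^\ast = P_{\Rc(T)}$.

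Finally, (iii) is essentially a reformulation: when $\Hc = \Kc$, the algebra $\Bc^\ast(\Hc)$ is a $C^\ast$-algebra, and we have exhibited $T \in \Bc^\ast(\Hc)$ with $T^\ast T$ a self-adjoint idempotent, equivalently $TT^\ast T = T$, which is exactly the defining property of a partial isometry in a $C^\ast$-algebra. I expect the only genuinely delicate points to be the two passages where the classical Hilbert-space argument is silent: deducing equality of operators from equality of gramians in (i), and passing from ``self-adjoint idempotent with prescribed range'' to ``$= P_{\Rc(T)}$'' in (ii). Both rest on the definiteness of the $\Zc$-valued inner product and on the uniqueness of the gramian self-adjoint projection attached to an accessible subspace, so these must be invoked explicitly rather than taken for granted.
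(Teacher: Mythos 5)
The paper states this proposition without proof, importing it from \cite{3}, so there is no in-text argument to compare against; judged on its own, your proof is correct and is the standard one. The two points you flag as delicate --- deducing $T^\ast T = P_{\Mc(T)}$ from the equality of gramians via definiteness of the $\Zc$-valued inner product, and identifying a gramian self-adjoint idempotent with the projection onto its (accessible) range --- are indeed the only places where the Loynes-space setting requires care beyond the Hilbert-space argument, and you handle both correctly.
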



\begin{prop}
\label{prop2} For $T\in\Lc(\Hc, \Kc)$, the following are equivalent:
\begin{itemize}
\item[(i)] $T\in \mathcal{PI}(\Hc, \Kc)$;
\item[(ii)] $T\in\Bc^\ast(\Hc, \Kc)$ and $T^\ast T$ is a gramian self-adjoint projection on $\Hc$;
\item[(iii)] $T\in \Bc^\ast(\Hc, \Kc)$ and $TT^\ast$ is a gramian self-adjoint projection on $\Kc$;
\item[(iv)] $T\in\Lc^\ast(\Hc, \Kc)$ and $T^\ast\in\mathcal{PI}(\Kc, \Hc)$.
\end{itemize}
\end{prop}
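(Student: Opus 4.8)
The plan is to run the cycle $(i)\Rightarrow(ii)\Rightarrow(i)$ together with the symmetric equivalence $(ii)\Leftrightarrow(iii)$, and then to read off $(iv)$ by feeding the already-proved equivalences into the operator $T^\ast$. The implication $(i)\Rightarrow(ii)$ is immediate: the inclusion $\mathcal{PI}(\Hc,\Kc)\subset\Bc^\ast(\Hc,\Kc)$ was recorded just after the Definition, and the preceding proposition already identifies $T^\ast T=P_{\Mc(T)}$ as a gramian self-adjoint projection, so nothing further is needed there.

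The heart of the matter is $(ii)\Rightarrow(i)$. I would put $P:=T^\ast T$ and let $\Mc:=\Rc(P)$ be the accessible subspace onto which the gramian projection $P$ projects. The first step is to pin down the kernel through the identity $[Th,Th]_\Kc=[h,T^\ast Th]_\Hc=[h,Ph]_\Hc$, valid for every $h\in\Hc$. Since $P$ projects onto $\Mc$, the right-hand side vanishes exactly when $Ph=0$, i.e. when $h\in\Mc^\perp$; invoking the positive-definiteness (non-degeneracy) of the gramian on the Loynes space $\Kc$, so that $[Th,Th]_\Kc=0$ forces $Th=0$, this yields $\Nc(T)=\Mc^\perp$, an accessible subspace, and by the double-complement property of accessible subspaces $\Mc=\Nc(T)^\perp$. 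The same identity shows that $T$ preserves the gramian on $\Mc$, where $Ph=h$. Because $T$ annihilates $\Mc^\perp$ one has $T=TP$, so the restriction $T|_{\Mc}$ maps onto $\Rc(T)$, is gramian-isometric, hence injective, and therefore is a gramian unitary from $\Mc$ onto $\Rc(T)$.

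The main obstacle is the accessibility of $\Rc(T)$, which I would settle by showing that $Q:=TT^\ast$ is a gramian self-adjoint projection whose range coincides with $\Rc(T)$. Using $T=TP$ one computes $Q^2=TT^\ast TT^\ast=T(T^\ast T)T^\ast=TPT^\ast=TT^\ast=Q$ and $Q^\ast=Q$, so $Q$ is a self-adjoint idempotent in the $C^\ast$-algebra $\Bc^\ast(\Kc)$; the Loynes-space analogue of the fact that such an idempotent is a gramian projection onto an accessible subspace, established through the gramian-orthogonal decomposition $k=Qk+(I-Q)k$, then applies. Finally $\Rc(Q)\subseteq\Rc(T)$ is trivial, while for $h\in\Hc$ the identity $Q(Th)=TT^\ast Th=TPh=Th$ gives $\Rc(T)\subseteq\Rc(Q)$; hence $\Rc(T)=\Rc(Q)$ is accessible with $P_{\Rc(T)}=TT^\ast$. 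This confirms $T\in\mathcal{PI}(\Hc,\Kc)$ and, as a byproduct, already proves $(ii)\Rightarrow(iii)$.

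The remaining equivalences are then formal. Running the symmetric argument with the roles of $T$ and $T^\ast$ interchanged (legitimate since $(T^\ast)^\ast=T\in\Bc^\ast(\Hc,\Kc)$) gives $(iii)\Rightarrow(ii)$, so $(ii)\Leftrightarrow(iii)$. For $(iv)$, if $(ii)$ holds then $(iii)$ makes $(T^\ast)^\ast T^\ast=TT^\ast$ a gramian projection, so applying the established equivalence $(ii)\Leftrightarrow(i)$ to the operator $T^\ast\in\Bc^\ast(\Kc,\Hc)$ yields $T^\ast\in\mathcal{PI}(\Kc,\Hc)$; conversely $(iv)$ feeds the same machine back to $T=(T^\ast)^\ast$ to recover $(i)$. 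I expect the only genuinely delicate point to be this Loynes-space version of ``self-adjoint idempotent equals gramian projection onto a complemented subspace'', i.e. the accessibility of $\Rc(TT^\ast)$; once the gramian is known to be positive-definite, everything else transcribes the classical Hilbert-space proof for partial isometries.
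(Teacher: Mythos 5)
Your argument is correct. Note that the paper itself gives no proof of this proposition: it is imported from reference \cite{3} (``In what follows some definitions and results from \cite{3} are needed''), so there is no in-paper proof to compare against. Your route is the natural transcription of the Hilbert-space argument, and you correctly isolate the one point that does not come for free in a Loynes $\Zc$-space, namely the accessibility of $\Nc(T)$ and $\Rc(T)$: you get the former from $\Nc(T)=\Nc(T^\ast T)=\Rc(I-P)$ (using $[Th,Th]=[Ph,Ph]$ and the non-degeneracy of the gramian, which is part of Loynes' axioms), and the latter from the fact that $Q=TT^\ast$ is a gramian self-adjoint idempotent, hence induces the gramian-orthogonal decomposition $\Kc=\Rc(Q)\oplus\Nc(Q)$, together with $\Rc(Q)=\Rc(T)$ via $QTh=TPh=Th$. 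The remaining equivalences by symmetry in $T\leftrightarrow T^\ast$ are handled properly, including the implicit use of $(T^\ast)^\ast=T$, which holds by uniqueness of adjoints. The only places you gloss are minor and easily filled: the identity $T=TP$ (which follows since $T$ annihilates $\Rc(I-P)=\Nc(T)$) and the fact that a surjective gramian isometry $\Mc\to\Rc(T)$ is automatically gramian unitary.
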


It is obvious that any gramian isometry or gramian co-isometry is a partial gramian isometry.

\section{The result}

Focusing on the case $\Hc=\Kc$ and taking $T\in \mathcal{PI}(\Hc)$, then the operators $T^\ast T$ and $TT^\ast$ will be two gramian self-adjoint projections in $\Bc^\ast(\Hc)$. It is thus interesting, as in the case of Hilbert space (see \cite[pp. 266,267]{10}), to find a sufficient condition on two gramian self-adjoint projections $P$ and $Q$ in order to have their ranges as initial and final space of a certain partial gramian isometry. Indeed the following assertion holds.

\begin{teo}\label{thm:carPQ}
If $P$ and $Q$ are gramian self-adjoint projections and
\begin{equation}\label{eq:ineg}
\norm{P-Q}<1,
\end{equation}
then there exists $T\in\mathcal{PI}(\Hc)$ such that $P=T^\ast T$ and $Q=TT^\ast$.
\end{teo}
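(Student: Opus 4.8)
The plan is to transplant the classical Hilbert-space argument (as in \cite[pp.~266,267]{10}) into the $C^\ast$-algebra $\Bc^\ast(\Hc)$, exploiting the fact recalled above that $\Bc^\ast(\Hc)$ is a $C^\ast$-algebra, so that the continuous functional calculus is at our disposal. Put $D:=P-Q$; it is gramian self-adjoint and, by \eqref{eq:ineg}, $\norm{D^2}=\norm{D}^2<1$. Consequently $S:=I-D^2$ is a positive element of $\Bc^\ast(\Hc)$ whose spectrum lies in $[\,1-\norm{D}^2,\,1\,]\subset(0,1]$; in particular $S$ is invertible, and by functional calculus its positive square root $S^{-1/2}$ exists and again belongs to $\Bc^\ast(\Hc)$.

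I would first record the algebraic identity $D^2=P+Q-PQ-QP$ and verify by direct multiplication that $PD^2=D^2P=P-PQP$ and, symmetrically, $QD^2=D^2Q=Q-QPQ$; thus $P$ and $Q$ commute with $D^2$, hence with $S$ and with $S^{-1/2}$ (the latter being a norm-limit of polynomials in $S$). Next I introduce
\[ V:=QP+(I-Q)(I-P)\in\Bc^\ast(\Hc), \]
and compute, using only $P^2=P$, $Q^2=Q$ and self-adjointness, that
\[ V^\ast V=VV^\ast=I-D^2=S, \qquad VP=QV=QP. \]
Since $V$ is built from $P$ and $Q$, it too commutes with $S^{-1/2}$.

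Now set $U:=V\,S^{-1/2}\in\Bc^\ast(\Hc)$. From $V^\ast V=VV^\ast=S$ and $S^{-1/2}SS^{-1/2}=I$ one gets $U^\ast U=UU^\ast=I$, so $U$ is gramian unitary; and from $VP=QV$ one gets $UP=QU$, whence $UPU^\ast=Q$. Defining
\[ T:=U P\in\Bc^\ast(\Hc), \]
the required identities follow at once: $T^\ast T=P\,U^\ast U\,P=P^2=P$ and $TT^\ast=U\,P^2\,U^\ast=UPU^\ast=Q$. Finally, since $T\in\Bc^\ast(\Hc)$ and $T^\ast T=P$ is a gramian self-adjoint projection, Proposition~\ref{prop2}(ii) gives $T\in\mathcal{PI}(\Hc)$; and, as recalled for partial gramian isometries above, $P=T^\ast T$ and $Q=TT^\ast$ are then the gramian projections onto the initial space $\Mc(T)$ and the final space $\Rc(T)$, which is exactly the assertion.

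The only place where the hypothesis \eqref{eq:ineg} and the pseudo-Hilbert structure genuinely intervene is the construction of $S^{-1/2}$ inside $\Bc^\ast(\Hc)$: it is crucial that $\norm{D}<1$ keep the spectrum of $S$ bounded away from $0$, so that $S$ is invertible and its square root stays among the adjointable, gramian-bounded operators. I expect this to be the main point to argue carefully; the remaining computations are formal $C^\ast$-algebraic manipulations, identical to the Hilbert-space case, where the only subtleties are to justify that $S^{-1/2}$ commutes with $P$, $Q$ and $V$ (via its approximation by polynomials in $S$) and that every operator produced, being a finite product of elements of $\Bc^\ast(\Hc)$, is again adjointable and gramian-bounded.
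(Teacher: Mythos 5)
Your proof is correct, but it takes a genuinely different route from the paper's. The paper works ``from the $P$ side'': it sets $A=I+P(Q-P)P$, checks that $A$ is positive and invertible because $\|I-A\|\le\|P-Q\|<1$, and defines $T:=QA^{-1/2}P$ directly; the identity $T^\ast T=P$ then follows from $PA=PQP$, but establishing $TT^\ast=Q$ requires a separate and somewhat delicate step (showing $\Rc(TT^\ast)\subset\Rc(Q)$ and $\Rc(I-TT^\ast)\subset\Rc(I-Q)$, hence $TT^\ast\le Q$ and $I-TT^\ast\le I-Q$). You instead build the symmetric operator $V=QP+(I-Q)(I-P)$, verify $V^\ast V=VV^\ast=S:=I-(P-Q)^2$ together with the intertwining relation $VP=QV$, and obtain a gramian unitary $U=VS^{-1/2}$ with $UPU^\ast=Q$, so that $T=UP$ yields both $T^\ast T=P$ and $TT^\ast=Q$ in one line each. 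Your algebra checks out: $P$ and $Q$ commute with $(P-Q)^2$, $V$ commutes with $S$ precisely because $V^\ast V=VV^\ast$, and the hypothesis $\|P-Q\|<1$ keeps the spectrum of $S$ in $(0,1]$ so that $S^{-1/2}$ exists in the $C^\ast$-algebra $\Bc^\ast(\Hc)$ and commutes with everything commuting with $S$. Both arguments use the same two pieces of machinery in the pseudo-Hilbert setting (invertibility of an element at distance less than $1$ from $I$, and square roots of positive invertible elements), and both conclude via Proposition~\ref{prop2}(ii). What your version buys is a stronger by-product --- a gramian unitary implementing the equivalence of $P$ and $Q$ --- and the complete avoidance of the paper's final range-inclusion argument; what the paper's version buys is that it only needs to invert a perturbation of the identity localized near $\Rc(P)$ rather than the globally defined $S$.
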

\begin{proof}
Denote $A=I+P(Q-P)P$. Since $\norm{I-A}=\norm{P(Q-P)P}\leq\norm{P-Q}<1$, by using that $\Bc^\ast(\Hc)$ is a Banach algebra, it results that $A$ is invertible with a bounded inverse. On the other hand the operator $A$ is positive. Indeed
\begin{equation*}
\begin{split}
[Ah, h]&=[h, h]+[P(Q-P)Ph, h]=[h, h]+[QPh, h]-[P^3h, h]\\
&=[(I-P)h, h]+[QPh, Ph]\geq0,
\end{split}
\end{equation*}
where we used the fact that $I-P$ and $Q$ are gramian self-adjoint projections. In this situation, there exists the square root of $A$, which is also invertible. The operator $T:=QA^{-1/2}P$ satisfies the requirements of the statement. Indeed we have $T^\ast=PA^{-1/2}Q$ and further on $PT^\ast=T^\ast=A^{-1/2}PQ$. Since $PA=AP$, we infer that $PA^{1/2}=A^{1/2}P$ which implies $A^{-1/2}P=PA^{-1/2}$. Further we get $TP=T=QPA^{-1/2}$. Taking into account that $PA=PQP$ we infer
\begin{equation*}
T^\ast T=A^{-1/2}PQQPA^{-1/2}=A^{-1/2}PQPA^{-1/2}=A^{-1/2}PAA^{-1/2}=P.
\end{equation*}
Using Proposition \ref{prop2} we infer that $T$ is a partial gramian isometry and $TT^\ast=P_{\Rc(T)}$. But, the calculation of $TT^{\ast}$ leads us to the equalities
\begin{equation*}
TT^\ast=QA^{-1/2}PPA^{-1/2}Q=QA^{-1}PQ,
\end{equation*}
which imply $\Rc(T)=\Rc(TT^\ast)\subset\Rc(Q)$, i.e. $TT^\ast\leq Q$. Now, let us show that $I-TT^\ast\leq I-Q$. Let $h\in(I-TT^\ast)\Hc$. Then the next implications hold
\begin{equation*}
\begin{split}
h\in(I-TT^\ast)\Hc & \Rightarrow h =(I-TT^\ast)h\Rightarrow TT^\ast h=0\\
&\Rightarrow R^\ast h\in\Nc(T)\cap\Rc(T^\ast)=\{0\}\Rightarrow T^\ast h=0\\
&\Rightarrow PA^{-1/2}Qh=0\Rightarrow PQh=0\Rightarrow (Q-P)Qh=Qh \\
& \Rightarrow Qh=0\Rightarrow\ (I-Q)h=h \\
& \Rightarrow h\in \Rc(I-Q).
\end{split}
\end{equation*}
This shows that $\Rc(I-TT^\ast)\subset\Rc(I-Q)$, which indicates that $I-TT^\ast\leq I-Q$. Hence the equality $TT^\ast=Q$ holds.
\end{proof}
\begin{rem}
Our theorem can be applied in perturbation theory to treat the variation of the spectral measure of gramian selfadjoint operators on pseudo-Hilbert spaces in a limit taking process. For the Hilbert space case see \cite{10} no. 135.
\end{rem}
\begin{rem}
Our theorem states that \eqref{eq:ineg} is a sufficient condition on the two gramian selfadjoint projections $P$ and $Q$ in order to determine the initial and final space of a partial isometry. This condition isn't however necessary, as the following example shows. For $V$ a gramian (non-unitary) isometry on $\Hc$ we have that $V^*V - VV^*$, being a gramian selfadjoint projection, has norm equal to $1$.

It would therefore be interesting to find a weaker condition that would still be sufficient.
\end{rem}
\begin{rem}
Our definition of the partial isometry on the pseudo-Hilbert space $\Hc$ as well as the statement of our result being given in the $C^*$~-~algebra $\Bc^*(\Hc)$ let us observe that following \cite{mbekhta} or \cite{MosicDjordjevic} it is possible to define and characterize the notion of a partial isometry in the Banach algebra $\Bc(\Hc) ( \supset \Bc^*(\Hc) )$. It is then naturally to ask if there exist such partial isometries in $\Bc(\Hc)$ which are not in $\Bc^*(\Hc)$ and if this would be the case, would an analogue of Theorem \ref{thm:carPQ} hold in $\Bc(\Hc)$ ?
\end{rem}

P\u{a}storel Ga\c{s}par, ``Aurel Vlaicu'' University, Arad, e-mail:{\tt pastogaspar@yahoo.com} \\
Loredana Ciurdariu, ``Politehnica'' University, Timi\c soara


\begin{thebibliography}{99}
\bibitem{1} S.A. Chobanyan, A. Weron, {\it Banach-space-valued stationary processes and thei linear prediction}, Dissertationes Math., 125, (1975), 1-45.

\bibitem{2} L. Ciurdariu, A. Cr\u aciunescu, {\it On Spectral Representation of Gramian Normal Operators on Pseudo-Hilbert Spaces}, Anal. Univ. de Vest Timi\c soara, Vol. XLV, (1), 2007, 131-149.

\bibitem{3} P. Ga\c spar, {\it Partial isometries on Loynes spaces}, Anal. Univ. de Vest Timi\c soara, Vol. XL, (2), 2002, 31-47.



\bibitem{Gheondea} A. Gheondea, B. E. Ugurcan, {\it On two equivalent dilation theorems in $VH$~-~spaces}, Complex Analysis and Operator Theory, vol. 6, (3), 2012, 625-650.

\bibitem{6} R.M. Loynes, {\it Linear operators in $VH$-spaces}, Trans. American Math. Soc., 116, (1965), 167-180.

\bibitem{7} R.M. Loynes, {\it On generalized positive definite functions}, Proc. London Math. Soc., 3, (1965), 373-384.

\bibitem{8} R.M. Loynes, {\it On a generalization of second-order stationarity}, Proc. London Math. Soc., 3, (1965), 385-398.

\bibitem{mbekhta} M. Mbekhta, {\it Partial isometries and generalized inverses}, Acta. Sci. Math. (Szeged) {\bf 70} (2004), 767-781.

\bibitem{MosicDjordjevic} D. Mosi\v c, D. S. Djordjevi\v c, {\it Partial isometries and EP elements in Banach algebras}, preprint, \verb+http://operator.pmf.ni.ac.rs/+
    \verb+licne_prezentacije/DDjordjevic/publications/+
    \verb+Partial-EP-Banach.pdf+

\bibitem{10} F. Riesz, B.Sz.-Nagy, {\it Le\c cons d'analyse fonctionnelle}, 4-\`eme Edition, Gauthier-Villars Paris and Akad\'emiai Kiad\'o Budapest, 1965.

\bibitem{11} A. Weron, S.A. Chobanyan, {\it Stochastic Processes on pseudo-Hilbert spaces(russian)}, Bull. Acad. Pol., Ser. Math. Phys., tom XX1, 9, (1973), 847-854.

\bibitem{12} A. Weron, {\it Prediction theory in Banach spaces}, Proc. of Winter School on Probability, Karpacz, Springer Verlag, London, 1975, 207-228.

\end{thebibliography}
\end{document}